\documentclass{amsart}

\usepackage{amsmath,amssymb,amsfonts,enumerate,amsthm}

\newcommand{\CM}{\mbox{CM}\,}
\newcommand{\lk}{\mbox{link}\,}

\newtheorem{thm}{Theorem}[section]

\newtheorem{lem}[thm]{Lemma}
\newtheorem{prop}[thm]{Proposition}
\newtheorem{defn}[thm]{Definition}

\newtheorem{exam}[thm]{Example}
\newtheorem{rem}[thm]{Remark}

\numberwithin{equation}{section}

\begin{document}
 \bibliographystyle{amsplain}
\title{Cohen-Macaulay-ness in codimension for bipartite graphs}
\author{Hassan Haghighi}
\address{Hassan Haghighi\\Department of Mathematics, K. N. Toosi
     University of Technology, Tehran, Iran.}
\author{Siamak Yassemi}
\address{Siamak Yassemi\\School of Mathematics, Statistics \&
Computer Science, University of Tehran, and School of Mathematics,
Institute for Research in Fundamental Sciences (IPM), Tehran, Iran.}
\author{Rahim Zaare Nahandi}
\address{Rahim Zaare Nahandi\\School of Mathematics, Statistics \&
Computer Science, University of Tehran, Tehran, Iran.}
\thanks{Emails: haghighi@kntu.ac.ir, yassemi@ipm.ir,
rahimzn@ut.ac.ir} \keywords{Flag complex, Cohen-Macaulay complex in
codimension}

\subjclass[2000]{13H10, 05C75}

\begin{abstract}

\noindent Let $G$ be an unmixed bipartite graph of dimension $d-1$.
Assume that $K_{n,n}$, with $n\ge 2$, is a maximal complete
bipartite subgraph of $G$ of minimum dimension. Then $G$ is
Cohen-Macaulay in codimension $d-n+1$. This generalizes a
characterization of Cohen-Macaulay bipartite graphs by Herzog and
Hibi and a result of Cook and Nagel on unmixed Buchsbaum graphs.
Furthermore, we show that any unmixed bipartite graph $G$ which is
Cohen-Macaulay in codimension $t$, is obtained from a Cohen-Macaulay
graph by replacing certain edges of $G$ with complete bipartite
graphs. We provide some examples.
\end{abstract}
\maketitle

\section{Introduction}

\maketitle \noindent Cohen-Macaulay simplicial complexes are among
central research topics in combinatorial commutative algebra. While
characterization of such complexes is a far reaching problem, one
appeals to study specific families of Cohen-Macaulay simplicial
complexes. Flag complexes are among important families of complexes
recommended to study \cite[page 100]{S1995}. However, it is known
that a simplicial complex is Cohen-Macaulay if and only if its
barycentric subdivision is a Cohen-Macaulay flag complex. Therefore,
a characterization of Cohen-Macaulay flag complexes is equivalent to
a characterization of Cohen-Macaulay simplicial complexes.
Nevertheless, after all, the ideal of a flag complex is generated by
quadratic square-free monomials, which are simpler compared with
arbitrary square-free monomial ideals. Furthermore, it seems that,
expressing many combinatorial properties in terms of graphs are more
convenient. As some evidences, the characterization of unmixed
bipartite graphs by Villarreal \cite{V2007} and Cohen-Macaulay
bipartite graphs by Herzog and Hibi \cite{HH2005} are well expressed
in terms of graphs.

On the other hand, in the hierarchy of families of graphs with
respect to Cohen-Macaulay property, Buchsbaum complexes appear right
after Cohen-Macaulay ones. Unmixed bipartite Buchsbaum graphs were
characterized by Cook and Nagel \cite{CN2012} (also by the authors
\cite{HYZ2010}). Natural families of graphs in this hierarchy are
bipartite $\CM_t$ graphs, i.e., graphs that their independence
complexes are pure and Cohen-Macaulay in codimension $t$. The
concept of $\CM_t$ simplicial complexes were introduced in
\cite{HYZ2012} which is the pure version of simplicial complexes
Cohen-Macaulay in codimension $t$ studied by Miller, Novik and
Swartz \cite{MNS2010}. In this note, we give characterizations of
unmixed bipartite $\CM_t$ graphs in terms of its dimension and the
minimum dimension of its maximal nontrivial complete bipartite
subgraphs. Cook and Nagel showed that the only non-Cohen-Macaulay
unmixed bipartite graphs are complete bipartite graphs \cite[Theorem
4.10]{CN2012} and \cite[Theorem 1.3]{HYZ2010}. Our results are
generalizations of this fact to unmixed bipartite graphs which are
Cohen-Macaulay in arbitrary codimension. In the next section we
gather necessary definitions and known results to be used in the
rest of the paper. In Section 3 we improve some results on joins of
simplicial complexes and disjoint unions of graphs with respect to
the $\CM_t$ property. Section 4 is devoted to two characterizations
of bipartite $\CM_t$ graphs and some examples.

\section{Preliminaries}
For basic definitions and general facts on simplicial complexes we
refer to the book of Stanley \cite{S1995}. By a complex we will
always mean a simplicial complex. Let $G = (V, E)$ be a simple graph
with vertex set $V$ and edge set $E$. The {\it inclusive
neighborhood} of $v\in V$ is the set $N[v]$ consisting of $v$ and
vertices adjacent to $v$ in $G$. The {\it independence complex} of
$G = (V, E)$ is the complex Ind$(G)$ with vertex set $V$ and with
faces consisting of independent sets of vertices of $G$, i.e., sets
of vertices of $G$ where no two elements of them are adjacent. These
complexes are called {\it flag complexes}, and their Stanley-Reisner
ideal is generated by quadratic square-free monomials. By {\it
dimension} of a graph $G$ we mean the dimension of the complex
Ind$(G)$. A graph $G$ is said
to be {\it unmixed} if Ind$(G)$ is pure.\\
For an integer $t \ge 0$, a complex $\Delta$ is called $\CM_t$ if it
is pure and for every face $F\in\Delta$ with $\#(F) \ge t$,
$\lk_{\Delta}(F)$ is Cohen-Macaulay. This is the same as pure
complexes which are Cohen-Macaulay in codimension $t$. Accordingly,
$\CM_0$ and $\CM_1$ complexes are precisely Cohen-Macaulay and
Buchsbaum complexes, respectively. Clearly, a $\CM_t$ complex is
$\CM_r$ for all $r\ge t$ and a complex of dimension $d-1$ is always
$\CM_{d-1}$. One uses the convention that for $t<0$, ${\rm CM}_t$
would mean ${\rm CM}_0$. A graph $G$ is called $\CM_t$ if Ind$(G)$
is $\CM_t$. A basic tool for checking $\CM_t$ property of complexes
is the following lemma.

\begin{lem}\label{basiclemma}(\cite[Lemma 2.3]{HYZ2012}) Let $t\ge 1$ and let $\Delta$ be a
nonempty complex. Then the following are equivalent:
\begin{itemize}

\item[(i)] $\Delta$ is a ${\rm CM}_t$ complex.

\item[(ii)] $\Delta$ is pure and $\lk_\Delta(v)$ is ${\rm CM}_{t-1}$ for
every vertex $v\in\Delta$.

\end{itemize}

\end{lem}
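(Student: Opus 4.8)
The plan is to prove the equivalence of the two conditions in the lemma, which characterizes $\CM_t$ complexes ($t \geq 1$) in terms of links of vertices being $\CM_{t-1}$.

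The plan is to establish the two implications separately, in each direction reducing everything to the transitivity of links, namely the identity
\[
\lk_{\lk_\Delta(v)}(G)=\lk_\Delta(G\cup\{v\}),
\]
valid for any vertex $v\in\Delta$ and any face $G\in\lk_\Delta(v)$ (so that $G\cup\{v\}\in\Delta$ and $v\notin G$); this is immediate from unwinding the definition of the link. The second ingredient is the bijection $G\mapsto G\cup\{v\}$ between the faces $G\in\lk_\Delta(v)$ with $\#(G)\ge t-1$ and the faces $F\in\Delta$ with $v\in F$ and $\#(F)\ge t$. Under this bijection the iterated link $\lk_{\lk_\Delta(v)}(G)$ is carried to $\lk_\Delta(F)$, so the Cohen-Macaulay conditions entering the two definitions correspond term by term.

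For (i) $\Rightarrow$ (ii), assume $\Delta$ is $\CM_t$. First I would check that $\lk_\Delta(v)$ is pure: in a pure complex a subset $F$ with $v\notin F$ is a facet of $\lk_\Delta(v)$ exactly when $F\cup\{v\}$ is a facet of $\Delta$, so purity of $\Delta$ forces all facets of $\lk_\Delta(v)$ to share one cardinality, and $\lk_\Delta(v)$ is nonempty since $v$ lies in some facet of $\Delta$. It then remains to verify the link condition: for every $G\in\lk_\Delta(v)$ with $\#(G)\ge t-1$ the link $\lk_{\lk_\Delta(v)}(G)=\lk_\Delta(G\cup\{v\})$ should be Cohen-Macaulay, but $\#(G\cup\{v\})\ge t$, so this is exactly the defining hypothesis of $\CM_t$ applied to $G\cup\{v\}$. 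Hence $\lk_\Delta(v)$ is $\CM_{t-1}$.

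For (ii) $\Rightarrow$ (i), purity of $\Delta$ is assumed outright, so only the link condition needs checking. Given $F\in\Delta$ with $\#(F)\ge t\ge 1$, the face $F$ is nonempty, so I may pick $v\in F$ and set $G=F\setminus\{v\}$; then $G\in\lk_\Delta(v)$ with $\#(G)\ge t-1$. Since $\lk_\Delta(v)$ is $\CM_{t-1}$ by hypothesis, $\lk_{\lk_\Delta(v)}(G)$ is Cohen-Macaulay, and by the transitivity identity this equals $\lk_\Delta(F)$. Thus $\lk_\Delta(F)$ is Cohen-Macaulay for every $F$ with $\#(F)\ge t$, which is precisely the $\CM_t$ condition.

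The computations are routine, so the only points demanding care are the purity transfer in the first implication and the base case $t=1$. There $t-1=0$ and $\CM_0$ must be read through the stated convention that it means Cohen-Macaulay; concretely one should note that the face $G=\emptyset$ of $\lk_\Delta(v)$ corresponds to $F=\{v\}$ and that $\lk_{\lk_\Delta(v)}(\emptyset)=\lk_\Delta(v)$, so the correspondence above remains valid at $\#(G)=0$. After disposing of this boundary case, the argument runs uniformly for all $t\ge 1$, and the main obstacle is really just bookkeeping rather than any substantial difficulty.
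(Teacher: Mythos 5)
Your proof is correct. The paper itself does not prove this lemma---it quotes it from \cite[Lemma 2.3]{HYZ2012}---so there is no in-paper argument to compare against; your argument (purity of links of pure complexes, plus the transitivity identity $\lk_{\lk_\Delta(v)}(G)=\lk_\Delta(G\cup\{v\})$ and the bijection $G\mapsto G\cup\{v\}$ matching the defining conditions of $\CM_{t-1}$ and $\CM_t$ term by term) is the standard and expected one, and your handling of the boundary case $t=1$ via the convention $\CM_0=$ Cohen--Macaulay is also sound.
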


By the straightforward identity $\lk_{{\rm Ind}(G)}(v) = {\rm
Ind}(G\setminus N[v])$, the counter-part of this lemma for graphs
would be the following:

\begin{lem}\label{lemmagraphs} Let $t\ge 1$ and let $G$ be a
graph. Then the following are equivalent:
\begin{itemize}

\item[(i)] $G$ is a ${\rm CM}_t$ graph.

\item[(ii)] $G$ is unmixed and $G\setminus N[v]$ is a ${\rm CM}_{t-1}$ graph for
every vertex $v \in G$.

\end{itemize}

\end{lem}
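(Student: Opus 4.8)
The plan is to derive Lemma~\ref{lemmagraphs} directly from Lemma~\ref{basiclemma} by transporting the statement through the independence-complex functor, using the stated identity $\lk_{\mathrm{Ind}(G)}(v) = \mathrm{Ind}(G\setminus N[v])$. First I would set $\Delta = \mathrm{Ind}(G)$, so that by definition $G$ is $\mathrm{CM}_t$ precisely when $\Delta$ is a $\mathrm{CM}_t$ complex, and apply Lemma~\ref{basiclemma} to $\Delta$, which is nonempty since $G$ is a graph on a nonempty vertex set (or handle the empty-vertex edge case separately). This immediately reduces the equivalence to matching up condition (ii) of Lemma~\ref{basiclemma} with condition (ii) here.

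The key translation step is to observe that the vertices of $\Delta = \mathrm{Ind}(G)$ are exactly the vertices $v$ of $G$ (each singleton $\{v\}$ is an independent set, hence a face), so quantifying over $v \in \Delta$ and over $v \in G$ are the same. Then for each such $v$, the identity $\lk_{\mathrm{Ind}(G)}(v) = \mathrm{Ind}(G\setminus N[v])$ lets me rewrite ``$\lk_\Delta(v)$ is $\mathrm{CM}_{t-1}$'' as ``$\mathrm{Ind}(G\setminus N[v])$ is $\mathrm{CM}_{t-1}$'', which by the definition of $\mathrm{CM}_t$ for graphs is exactly ``$G\setminus N[v]$ is a $\mathrm{CM}_{t-1}$ graph.'' Finally, ``$\Delta$ is pure'' is by definition ``$G$ is unmixed,'' so the two forms of condition (ii) coincide verbatim.

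The only point requiring a little care is the meaning of $\mathrm{CM}_{t-1}$ when $t = 1$, so that $t-1 = 0$: here $\mathrm{CM}_0$ is just Cohen-Macaulay, and one should check that the identity and the definitions are consistent in this boundary case, which they are since $G\setminus N[v]$ being $\mathrm{CM}_0$ means its independence complex is Cohen-Macaulay. I expect no serious obstacle: the entire argument is a formal transport along the functor $\mathrm{Ind}(-)$, and the substantive content is already contained in Lemma~\ref{basiclemma} together with the link identity. The main thing to get right is simply ensuring the vertex sets of $G$ and $\Delta$ are identified correctly and that the degenerate/empty cases do not break the stated equivalence.
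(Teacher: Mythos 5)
Your proposal is correct and follows exactly the paper's route: the paper ``proves'' Lemma~\ref{lemmagraphs} in the single sentence preceding it, by invoking the identity $\lk_{{\rm Ind}(G)}(v) = {\rm Ind}(G\setminus N[v])$ to transport Lemma~\ref{basiclemma} to graphs, with unmixedness of $G$ corresponding to purity of ${\rm Ind}(G)$. Your write-up merely makes explicit the bookkeeping (vertex identification, the $t=1$ boundary case) that the paper leaves implicit.
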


We recall some basic relevant facts on bipartite graphs. A graph $G
= (V,E)$ is called {\it bipartite} if $V$ is a disjoint union of a
partition $V_1$ and $V_2$ and $E \subset V_1 \times V_2$. If
$\#(V_1) = m$ and $\#(V_2) = n$ and $E = V_1 \times V_2$, then $G$
is the {\it complete} bipartite graph $K_{m,n}$. We will be
interested in unmixed complete bipartite graphs $K_{n,n}$.\\

Unmixed bipartite graphs are characterized by Villarreal in the
following result.

\begin{thm}\label{Villarreal} \cite[Theorem 1.1]{V2007} Let $G$ be a bipartite graph without
isolated vertex. Then $G$ is unmixed if and only if there is a
partition $V_1 = \{x_1, \cdots, x_n\}$ and $V_2 = \{y_1, \cdots,
y_n\}$ of vertices of $G$ such that
\begin{itemize}

\item[(1)] $x_iy_i$ is an edge in $G$ for $1\le i \le n$ and

\item[(2)] If $x_iy_j$ and $x_jy_k$ are edges in $G$, for some distinct $i$,
$j$ and $k$, then $x_iy_k$ is an edge in $G$.

\end{itemize}
\end{thm}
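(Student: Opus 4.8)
The plan is to reduce the problem to a normal form in which a perfect matching sits on the ``diagonal'', and then read off both implications from elementary counting arguments keyed on condition (2). First I would translate unmixedness: $G$ is unmixed exactly when all maximal independent sets of $G$ (the facets of ${\rm Ind}(G)$) have the same cardinality. Writing $V = V_1 \sqcup V_2$ for the bipartition, the absence of isolated vertices makes each of $V_1$ and $V_2$ a \emph{maximal} independent set, so unmixedness forces $\#(V_1) = \#(V_2) =: n$. Since the complement of a maximal independent set is a minimal vertex cover, $V_1$ and $V_2$ are minimal vertex covers of size $n$; unmixedness then says every minimal vertex cover has size $n$, so the minimum vertex cover number is $n$. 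By K\"onig's theorem the maximum matching of $G$ has size $n$, which between two parts of size $n$ is a perfect matching. Relabelling so that this matching is $\{x_iy_i : 1 \le i \le n\}$ yields (1), and from now on every independent set of $G$ has at most $n$ vertices (at most one per matched pair $\{x_i,y_i\}$). The remaining content of unmixedness is that every \emph{maximal} independent set has exactly $n$ vertices.

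\emph{Necessity of (2).} Arguing contrapositively, I would suppose (1) holds but (2) fails: there are distinct $i,j,k$ with $x_iy_j, x_jy_k \in E$ and $x_iy_k \notin E$. Then $\{x_i,y_k\}$ is independent; extend it to a maximal independent set $M$. Since $x_i \in M$ and $x_iy_j \in E$ we get $y_j \notin M$, and since $y_k \in M$ and $x_jy_k \in E$ we get $x_j \notin M$; thus $M$ contains neither vertex of the matched pair $\{x_j,y_j\}$. As $M$ meets each of the $n$ pairs in at most one vertex, $\#(M) \le n-1$, so $G$ has a maximal independent set smaller than $n$ and is not unmixed.

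\emph{Sufficiency.} Assume (1) and (2), and let $M$ be a maximal independent set with $\#(M) < n$. By the pigeonhole principle applied to the $n$ matched pairs, some pair $\{x_p, y_p\}$ meets $M$ in neither vertex. I claim $M \cup \{x_p\}$ or $M \cup \{y_p\}$ is independent, contradicting maximality. If not, then $x_p$ has a neighbour $y_b \in M$ and $y_p$ has a neighbour $x_a \in M$, i.e.\ $x_py_b, x_ay_p \in E$; here $a \ne p$ and $b \ne p$ because $x_p, y_p \notin M$, and $a \ne b$ since otherwise $x_a, y_a \in M$ would span the edge $x_ay_a$. Thus $a,p,b$ are distinct, and applying (2) to the edges $x_ay_p$ and $x_py_b$ gives $x_ay_b \in E$ --- but $x_a, y_b \in M$, contradicting independence. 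Hence $M$ extends, so every maximal independent set has exactly $n$ vertices and $G$ is unmixed.

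I expect the main obstacle to be the reduction step rather than the two counting arguments: pinning down that unmixedness yields a perfect matching on the diagonal (through the identification of $V_1, V_2$ as minimal vertex covers and the appeal to K\"onig's theorem) is what makes condition (2) translate so cleanly into the ``blocking'' and ``extension'' dichotomy above. Once the diagonal matching is in place, condition (2) is precisely the transitivity needed to rule out inextendible small independent sets, and the two implications become short and essentially symmetric.
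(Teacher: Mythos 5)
Your proof is correct. Note, however, that the paper itself gives no proof of this statement: it is quoted as a known result from Villarreal \cite{V2007}, so there is no internal argument to compare yours against. What you have written is a complete, self-contained proof along the standard lines of the original: you use the absence of isolated vertices to make $V_1$ and $V_2$ maximal independent sets (forcing $\#(V_1)=\#(V_2)=n$), pass to minimal vertex covers and invoke K\"onig's theorem to put a perfect matching on the diagonal, and then settle both directions by counting against the $n$ matched pairs --- blocking the pair $\{x_j,y_j\}$ to produce a short maximal independent set when (2) fails, and extending any maximal independent set of size less than $n$ when (2) holds. All the small verifications that the argument needs (that $a$, $p$, $b$ are pairwise distinct before applying (2), and that an independent set meets each matched pair at most once) are present, so there is no gap; this is essentially the proof one finds in the literature, phrased with independent sets rather than vertex covers.
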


In this case, such a partition and ordering is called a {\it pure
order} of $G$. The edges $x_iy_i$, $i =1,\cdots, n$ are called a
{\it perfect matching} edges of $G$. A pure order is said to have a
{\it cross} if, for some $i\ne j$, $x_iy_j$ and $x_jy_i$ are both
edges in $G$. Otherwise, the order is called {\it cross-free} (see
\cite[$\S$ 4]{CN2012}). For unmixed bipartite graphs, being
cross-free is independent of an ordering of vertices of $G$. More
precisely, if $G$ has a cross in some pure ordering, it has a cross
in every pure
ordering \cite[Lemma 4.5]{CN2012}.\\

An immediate consequence of Theorem \ref{Villarreal} is the
following useful lemma.

\begin{lem}\label{compneighb} Let $G$ be an unmixed bipartite
graph with pure order of vertices $(\{x_1,\cdots, x_d\},
\{y_1,\cdots, y_d\})$ and let $K_{n,n}$ be a complete bipartite
subgraph of $G$ on $(\{x_{i_1},\cdots, x_{i_n}\}, \{y_{i_1},\cdots,
y_{i_n}\})$.
\begin{itemize}

\item[(i)] If $x_jy_{i_k}$ is an edge in $G$ for some $j$ and $k$, then $x_jy_{i_l}$
is an edge in $G$ for all $l=1,\cdots, n$.
\item[(ii)] If $x_{i_k}y_{j}$ is an edge in $G$ for some $k$ and $j$, then $x_{i_l}y_j$
is an edge in $G$ for all $l=1,\cdots, n$.

\end{itemize}
\end{lem}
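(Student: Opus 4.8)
The plan is to derive both statements directly from condition (2) of Theorem \ref{Villarreal}, using the completeness of $K_{n,n}$ to supply the ``missing'' edges. The key observation is that condition (2) lets one compose edges: whenever $x_ay_b$ and $x_by_c$ are edges with $a,b,c$ pairwise distinct, the edge $x_ay_c$ is forced. Since all the edges internal to $K_{n,n}$ are available by hypothesis, the idea is to route through the subscript of a perfect-matching/complete edge.

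For part (i), suppose $x_jy_{i_k}$ is an edge and fix $l\in\{1,\dots,n\}$. First I would dispose of the degenerate cases in which $x_jy_{i_l}$ is an edge for trivial reasons: if $l=k$ this is exactly the hypothesis; if $j=i_l$ then $x_jy_{i_l}=x_{i_l}y_{i_l}$ is a perfect-matching edge by condition (1); and if $j=i_k$ then $x_jy_{i_l}=x_{i_k}y_{i_l}$ is an edge because $K_{n,n}$ is complete. In the remaining case the indices $j$, $i_k$, $i_l$ are pairwise distinct, so I would apply condition (2) to the two edges $x_jy_{i_k}$ (the hypothesis) and $x_{i_k}y_{i_l}$ (completeness of $K_{n,n}$), with $i_k$ playing the role of the middle index, to conclude that $x_jy_{i_l}$ is an edge.

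Part (ii) is entirely symmetric. Given the edge $x_{i_k}y_j$ and a fixed $l$, the same trivial-case reductions (via $l=k$, condition (1) when $j=i_l$, and completeness when $j=i_k$) leave only the case where $i_l$, $i_k$, $j$ are pairwise distinct; there I would apply condition (2) to $x_{i_l}y_{i_k}$ (completeness of $K_{n,n}$) and $x_{i_k}y_j$ (the hypothesis) to obtain the edge $x_{i_l}y_j$.

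There is essentially no serious obstacle here, which is consistent with the lemma being stated as an immediate consequence of Theorem \ref{Villarreal}; the only point requiring any care is the pairwise-distinctness hypothesis built into condition (2), which is precisely why the degenerate index coincidences are separated out and settled first using condition (1) and the completeness of $K_{n,n}$.
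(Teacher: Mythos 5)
Your proof is correct and follows the same route as the paper: both apply condition (2) of Theorem \ref{Villarreal} with $i_k$ as the middle index, using the edges of $K_{n,n}$ (namely $x_{i_k}y_{i_l}$ for part (i) and $x_{i_l}y_{i_k}$ for part (ii)) to force the desired edge. The only difference is that you explicitly separate out the degenerate index coincidences, which the paper's one-line proof leaves implicit; this is a point of care, not a change of method.
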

\begin{proof}
The assertion $(i)$ is immediate by Theorem \ref{Villarreal} because
$x_{i_k}y_{i_l}$ is an edge in $K_{n,n} \subset G$ for all
$l=1,\cdots, n$. Also $(ii)$ follows because $x_{i_l}y_{i_k}$ is an
edge in $K_{n,n} \subset G$ for all $l=1,\cdots, n$.
\end{proof}

There are also at least two nice characterization of Cohen-Macaulay
bipartite graphs.

\begin{thm} \label{HerHib} \cite[Theorem 3.4]{HH2005} Let $G$ be a bipartite graph
without isolated vertices. Then $G$ is Cohen-Macaulay if and only if
there is a pure ordering $V_1 = \{x_1, \cdots, x_n\}$ and $V_2 =
\{y_1, \cdots, y_n\}$ of vertices of $G$ such that $x_iy_j$ being in
$G$ implies $i \le j$.
\end{thm}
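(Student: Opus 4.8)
The plan is to pass, in both directions, through the partial order on $[n]=\{1,\dots,n\}$ defined by $i\preceq j$ if and only if $x_iy_j$ is an edge of $G$. First note that if $G$ is Cohen--Macaulay then ${\rm Ind}(G)$ is pure, so $G$ is unmixed and Theorem \ref{Villarreal} supplies a pure order $(\{x_1,\dots,x_n\},\{y_1,\dots,y_n\})$; thus in either direction I may assume the two Villarreal conditions hold. Reflexivity of $\preceq$ is exactly condition $(1)$ (the matching edges $x_iy_i$), and transitivity is exactly condition $(2)$, after disposing of the trivial cases in which two of the three indices coincide. Antisymmetry is the delicate point: $i\preceq j$ and $j\preceq i$ with $i\ne j$ means precisely that $x_iy_j$ and $x_jy_i$ are both edges, i.e. that the pure order has a cross. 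Hence $\preceq$ is a genuine partial order if and only if $G$ is cross-free, and the relabelling sought in the statement is nothing but a linear extension of $\preceq$: if I reorder so that $i\preceq j\Rightarrow i\le j$, then $x_iy_j\in G$ forces $i\preceq j$, hence $i\le j$, which is the asserted implication.

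For the direction ``if such an ordering exists, then $G$ is Cohen--Macaulay'' I would argue through the Alexander dual of the edge ideal $I(G)$, which is the Stanley--Reisner ideal of ${\rm Ind}(G)$. Here $\preceq$ is automatically a partial order, since the condition $x_iy_j\in G\Rightarrow i\le j$ rules out crosses. The minimal generators of $I(G)^{\vee}$ are the monomials $\prod_{v\in C}v$ over the minimal vertex covers $C$ of $G$. Using Lemma \ref{compneighb} and the down-set structure, one checks that the minimal vertex covers are exactly the sets $\{x_i: i\in\alpha\}\cup\{y_j: j\notin\alpha\}$ with $\alpha$ an order ideal of $([n],\preceq)$; in particular they all have cardinality $n$, recovering unmixedness, and $I(G)^{\vee}$ is the Hibi ideal of the poset. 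Ordering the order ideals by a linear extension of the distributive lattice they form exhibits linear quotients for $I(G)^{\vee}$, hence a linear resolution, and the Eagon--Reiner theorem then yields that $S/I(G)$ is Cohen--Macaulay.

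By the first paragraph, the remaining (and main) task is the converse: a Cohen--Macaulay, hence unmixed, bipartite graph cannot have a cross, and this is where I expect the real work. A cross $x_iy_j,x_jy_i$ together with the matching edges $x_iy_i,x_jy_j$ produces an induced $K_{2,2}$ on $W=\{x_i,x_j,y_i,y_j\}$, whose independence complex is a pair of disjoint edges and is therefore disconnected, so it is not Cohen--Macaulay. To turn this local defect into a contradiction with Reisner's criterion I would isolate this $K_{2,2}$ inside a link: find an independent set $F$, disjoint from and non-adjacent to $W$, with $G\setminus N[F]$ equal to the induced graph on $W$, so that $\lk_{{\rm Ind}(G)}(F)={\rm Ind}(K_{2,2})$ is positive-dimensional yet has $\tilde H_0\ne 0$. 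Since links of faces of a Cohen--Macaulay complex are again Cohen--Macaulay, this contradicts the hypothesis. The subtle step, and the one using unmixedness in an essential way, is the existence of such a separating face $F$; the natural approach is to construct it by a sequence of vertex deletions $G\mapsto G\setminus N[v]$, i.e. successive links, each chosen to shrink the graph while preserving the cross, until only $W$ survives.
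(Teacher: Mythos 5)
First, a structural remark: the paper does not prove this statement at all --- it is quoted from Herzog--Hibi \cite{HH2005} --- so your proposal can only be measured against the standard arguments, not against an internal proof. Your first two steps are sound and essentially follow the Herzog--Hibi route: Villarreal's conditions make the relation $i\preceq j \Leftrightarrow x_iy_j\in G$ reflexive and transitive, antisymmetry is exactly cross-freeness, and a Macaulay order is a simultaneous relabelling of the pairs $(x_i,y_i)$ by a linear extension; for the ``if'' direction, the minimal vertex covers are indeed the sets $\{x_i:i\in\alpha\}\cup\{y_j:j\notin\alpha\}$ with $\alpha$ an order ideal, so $I(G)^\vee$ is a Hibi ideal, which has linear quotients, hence a linear resolution, and Eagon--Reiner applies. (One quibble: what rules out a minimal cover containing both $x_i$ and $y_i$ is transitivity, i.e.\ condition (2) of Theorem \ref{Villarreal}, not Lemma \ref{compneighb}.)

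The genuine gap is in the converse (Cohen--Macaulay $\Rightarrow$ cross-free), and it is worse than a missing detail: the face $F$ you postulate need not exist. If the cross $x_iy_j,x_jy_i$ sits inside a larger complete bipartite subgraph, say a $K_{3,3}$ on indices $\{i,j,k\}$, then no independent $F$ avoiding the neighborhood of $W=\{x_i,x_j,y_i,y_j\}$ can ever delete $x_k$: any vertex whose closed neighborhood contains $x_k$ is either $x_k$ itself (adjacent to $y_i\in W$) or some $y_l$ with $x_ky_l\in G$, and then Lemma \ref{compneighb} forces $x_iy_l\in G$, so that vertex is adjacent to $W$ too. Hence $G\setminus N[F]$ can never equal the induced $K_{2,2}$ on $W$, and likewise no sequence of deletions $G\mapsto G\setminus N[v]$ ``preserving the cross'' can terminate in $W$. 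The repair is to enlarge the target: let $S\supseteq\{i,j\}$ index a \emph{maximal} complete bipartite subgraph $K_{m,m}$, $m\ge 2$, containing the cross. Maximality together with Lemma \ref{compneighb} shows that each $k\notin S$ is either below $S$ (all $x_ky_s\in G$), above $S$ (all $x_sy_k\in G$), or incomparable, and never both below and above; taking $F=\{y_k: k \text{ below } S\}\cup\{x_k: k \text{ above or incomparable}\}$, transitivity shows $F$ is independent and non-adjacent to the $K_{m,m}$, while the matching edges $x_ky_k$ give $N[F]\supseteq\{x_k,y_k:k\notin S\}$. Then $\lk_{{\rm Ind}(G)}(F)={\rm Ind}(K_{m,m})$ is disconnected of dimension $m-1\ge 1$, contradicting Cohen--Macaulayness of links. (Alternatively, one can avoid links entirely: facets of ${\rm Ind}(G)$ are the up-sets of the preorder $\preceq$, two facets share a ridge iff the up-sets differ in exactly one index, and an equivalence class of size $\ge 2$ --- i.e.\ a cross --- disconnects this dual graph, contradicting connectedness in codimension one.) As written, however, your third paragraph asserts precisely the step that fails, so the proof is incomplete.
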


The ordering in Theorem \ref{HerHib} is called a {\it Macaulay
order} of vertices of $G$.

\begin{prop} \label{cookNag} \cite[Proposition 4.8]{CN2012}  Let $G$ be a bipartite graph.
Then $G$ is Cohen-Macaulay if and only if $G$ has a cross-free pure
order.
\end{prop}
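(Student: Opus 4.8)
The plan is to connect cross-freeness with the Macaulay order of Theorem~\ref{HerHib}, since by that theorem $G$ is Cohen-Macaulay precisely when a Macaulay order exists. The forward implication is then immediate. If $G$ is Cohen-Macaulay, fix a Macaulay order, so that $x_iy_j$ being an edge forces $i\le j$. Were this order to contain a cross, say $x_iy_j$ and $x_jy_i$ both edges with $i\neq j$, the Macaulay condition would give both $i\le j$ and $j\le i$, hence $i=j$, a contradiction. Thus the Macaulay order is itself a cross-free pure order, and $G$ has a cross-free pure order.

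For the converse I would start from a given cross-free pure order $(\{x_1,\dots,x_n\},\{y_1,\dots,y_n\})$ and show it can be relabelled into a Macaulay order. Define a relation on the index set $\{1,\dots,n\}$ by declaring $i\prec j$ exactly when $x_iy_j$ is an edge of $G$ and $i\neq j$. The key claim is that $\prec$ is a strict partial order. Irreflexivity holds by definition. Antisymmetry is precisely cross-freeness: if $i\prec j$ and $j\prec i$, then $x_iy_j$ and $x_jy_i$ are both edges with $i\neq j$, a cross. For transitivity, suppose $i\prec j$ and $j\prec k$; first $i\neq k$, since $i=k$ would make $x_iy_j,x_jy_i$ a cross, so $i,j,k$ are distinct, and condition~(2) of Theorem~\ref{Villarreal} yields that $x_iy_k$ is an edge, whence $i\prec k$.

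Once $\prec$ is known to be a strict partial order, I would take any linear extension of it and relabel the vertices by the corresponding permutation $\sigma$, renaming $x_i,y_i$ to $x_{\sigma(i)},y_{\sigma(i)}$ simultaneously so that the perfect matching is preserved. Since conditions~(1) and~(2) of Theorem~\ref{Villarreal} are intrinsic to the bipartite structure together with the matching, they survive this simultaneous relabelling, so the new order is again a pure order; and by construction of the linear extension, an edge $x_iy_j$ with $i\neq j$ now forces $i<j$, while the diagonal matching edges satisfy $i\le i$. Thus the relabelled order is a Macaulay order, and Theorem~\ref{HerHib} shows that $G$ is Cohen-Macaulay.

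The main obstacle is establishing that $\prec$ is a genuine partial order, specifically the transitivity step, where one must first rule out the collision $i=k$ (handled by cross-freeness) before Villarreal's transitivity condition applies; the remaining work, namely extracting a linear extension and checking that simultaneous relabelling preserves the pure-order axioms, is routine. I also note that a pure order furnishes a perfect matching $x_iy_i$, so $G$ has no isolated vertices and Theorem~\ref{HerHib} applies as stated.
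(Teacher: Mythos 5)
Your proof is correct, but note that the paper offers no proof of Proposition \ref{cookNag} to compare it with: the result is simply quoted from Cook and Nagel \cite[Proposition 4.8]{CN2012}. What you have produced is a self-contained derivation from the two characterizations the paper does state, Theorem \ref{Villarreal} and Theorem \ref{HerHib}. Your forward direction is immediate, as you say. Your converse is sound, and its two delicate points are handled correctly: asymmetry of the relation $i\prec j$ is exactly cross-freeness, and transitivity follows from condition (2) of Theorem \ref{Villarreal} only after the collision $i=k$ is ruled out, again by cross-freeness; a linear extension of $\prec$ (which exists because a finite irreflexive transitive relation is acyclic) then relabels the given pure order into a Macaulay order, and the simultaneous relabelling of matched pairs $x_i, y_i$ visibly preserves conditions (1) and (2) of Theorem \ref{Villarreal}. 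This topological-sort argument is in substance how Cook and Nagel themselves prove the result, so you have not so much taken a different route as supplied the argument the paper leaves to a citation.

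One caveat, which is a defect of the quoted statement rather than of your argument: Theorem \ref{HerHib} assumes $G$ has no isolated vertices, and the proposition as stated here omits that hypothesis. You correctly observe that in the converse direction a pure order forces a perfect matching, hence no isolated vertices, so Theorem \ref{HerHib} applies. But in the forward direction the hypothesis is genuinely needed: a single edge together with an isolated vertex is a Cohen--Macaulay bipartite graph (its independence complex is a path, hence connected of dimension one), yet it admits no pure order at all. So the ``only if'' direction, and with it your appeal to Theorem \ref{HerHib} there, requires the standing convention of \cite{CN2012} that the graphs considered have no isolated vertices; it would be worth saying this explicitly.
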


Bipartite Buchsbaum graphs are also classified. First recall that a
complex is Buchsbaum if and only if it is pure and the link of each
vertex is Cohen-Macaulay \cite{Sch1981}. Thus, a graph is Buchsbaum
if and only if $G$ is unmixed and for each vertex $v\in G$,
$G\setminus N[v]$ is Cohen-Macaulay. For bipartite graphs there is a
sharper result. Complete bipartite graphs are well-known to be
Buchsbaum (e.g., see \cite[Proposition 2.3]{ZZ2004}). But indeed,
the converse is also true.

\begin{thm} \label{HYZBuch} (see \cite[Theorem 4.10]{CN2012} or
\cite[Theorem1.3]{HYZ2010}) Let $G$ be a bipartite graph. Then $G$
is Buchsbaum if and only if $G$ is a complete bipartite graph
$K_{n,n}$ for some $n\ge 2$, or $G$ is Cohen-Macaulay.
\end{thm}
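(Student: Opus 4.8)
My plan is to prove the two implications separately, with the forward (necessity) direction carrying essentially all the work.

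\emph{Sufficiency.} If $G$ is Cohen--Macaulay it is $\CM_0$, hence $\CM_1$, i.e. Buchsbaum. If $G=K_{n,n}$ with $n\ge 2$, then no independent set can meet both parts, so ${\rm Ind}(K_{n,n})$ is the disjoint union of the two $(n-1)$-simplices on $V_1$ and $V_2$; in particular it is pure. For $v\in V_1$ one has $N[v]=\{v\}\cup V_2$, so $K_{n,n}\setminus N[v]$ is edgeless on $V_1\setminus\{v\}$ and its link ${\rm Ind}(K_{n,n}\setminus N[v])$ is a full simplex, which is Cohen--Macaulay. By Lemma~\ref{basiclemma} (with $t=1$), $K_{n,n}$ is Buchsbaum.

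\emph{Necessity.} Assume $G$ is Buchsbaum but not Cohen--Macaulay; I will show $G=K_{n,n}$. By Proposition~\ref{cookNag} $G$ has a cross, so fixing a pure order (Theorem~\ref{Villarreal}) there are $p\ne q$ with $x_py_q,x_qy_p\in E$; together with the matching edges these give a $K_{2,2}$ on the pairs $p,q$, so $n\ge 2$. Using Lemma~\ref{compneighb} I record the two sets $A=\{j:x_jy_p\in E\}=\{j:x_jy_q\in E\}$ and $B=\{j:x_py_j\in E\}=\{j:x_qy_j\in E\}$, both containing $p$ and $q$. The target is $A=B=\{1,\dots,n\}$: once this holds, for arbitrary $r,s$ the edges $x_ry_p\in E$ (as $r\in A$) and $x_py_s\in E$ (as $s\in B$) force $x_ry_s\in E$ by Theorem~\ref{Villarreal}(2) whenever $r,p,s$ are distinct, the degenerate cases being matching edges or edges recorded in $A,B$; hence $G=K_{n,n}$.

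To force $A=\{1,\dots,n\}$, suppose instead that some $a\notin A$, so that $x_ay_p,x_ay_q\notin E$ and $a\ne p,q$. By Lemma~\ref{lemmagraphs}, $G':=G\setminus N[x_a]$ is Cohen--Macaulay, and the key point is to describe $G'$ precisely: for every half pair $s$ (meaning $y_s\in N(x_a)$ with $s\ne a$), the vertex $x_s$ is \emph{isolated} in $G'$. Indeed, any neighbour $y_t$ of $x_s$ yields edges $x_ay_s,x_sy_t\in E$, so Theorem~\ref{Villarreal}(2) gives $x_ay_t\in E$, i.e. $y_t\in N(x_a)$ is deleted. Therefore $G'$ is the disjoint union of isolated vertices with the induced subgraph $G''$ on the fully surviving matched pairs $\{(x_i,y_i):i\ne a,\ x_ay_i\notin E\}$, and the inherited matching exhibits $G''$ as a pure order. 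Since $a\notin A$, the pairs $p,q$ and the cross edges $x_py_q,x_qy_p$ all survive into $G''$, so $G''$ has a cross and is not Cohen--Macaulay by Proposition~\ref{cookNag}; hence ${\rm Ind}(G')$, an iterated cone over ${\rm Ind}(G'')$, is not Cohen--Macaulay, contradicting Buchsbaum-ness. Thus $A=\{1,\dots,n\}$, and the transpose argument (deleting $N[y_b]$ for a putative $b\notin B$ and invoking Lemma~\ref{compneighb}(ii)) gives $B=\{1,\dots,n\}$.

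The step I expect to be the real obstacle is this middle one: a surviving copy of $K_{2,2}$ need not be a cross, since a Cohen--Macaulay bipartite graph may well contain an induced $4$-cycle, so I must guarantee that $x_py_p$ and $x_qy_q$ remain matching edges of an honest pure order of the deleted graph. The isolation lemma for half pairs, powered by Villarreal's transitivity in Theorem~\ref{Villarreal}(2), is exactly what removes this matching ambiguity and lets Cook--Nagel's cross criterion certify the failure of Cohen--Macaulay-ness in $G\setminus N[x_a]$.
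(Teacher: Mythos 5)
Your proof is correct, but it necessarily differs from the paper's, because the paper offers no proof of this statement at all: Theorem~\ref{HYZBuch} is quoted as a known preliminary, with citations to \cite[Theorem 4.10]{CN2012} and \cite[Theorem 1.3]{HYZ2010}. What you have produced is a self-contained argument built entirely from the paper's own toolkit --- Theorem~\ref{Villarreal}, Lemma~\ref{compneighb}, Lemma~\ref{lemmagraphs}, Proposition~\ref{cookNag} together with the quoted fact that a cross in one pure order forces a cross in every pure order, and Theorem~\ref{joingraphs}(i) --- and it is sound: the sufficiency half is immediate, and in the necessity half your isolation lemma (if $y_s\in N(x_a)$ with $s\ne a$, then $x_s$ is isolated in $G\setminus N[x_a]$, by Theorem~\ref{Villarreal}(2)) correctly yields the decomposition of $G\setminus N[x_a]$ into isolated vertices and an induced unmixed bipartite graph $G''$ on the fully surviving matched pairs, in which the cross on $p,q$ survives precisely because $a\notin A$; the degenerate index cases in your applications of condition (2) are all covered. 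It is worth noting that your argument is, in effect, a direct, induction-free rendering of the $t=1$ case of the paper's Theorem~\ref{main}: the proof there makes exactly the same move (delete $N[x_i]$, split off isolated vertices, pass to the surviving unmixed graph $H$, apply the cross criterion), and your isolation lemma supplies the justification that the paper leaves unstated for its claim that $G\setminus N[x_i]$ is such a disjoint union; conversely, Theorem~\ref{main} recovers the statement you proved, since Buchsbaum means $\CM_1$, so $d-n+1\le 1$ forces $n=d$ and $G=K_{n,n}$. The trade-off: the paper buys brevity by citation and gets the result as a degenerate case of a stronger theorem, while your route buys a short, self-contained proof of exactly this case. (Like the paper's preliminaries, you tacitly assume $G$ has no isolated vertices; this is harmless, since an isolated vertex makes ${\rm Ind}(G)$ a cone, and a cone is Buchsbaum if and only if it is Cohen--Macaulay.)
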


\section{Joins of $\CM_t$ complexes and disjoint unions of $\CM_t$ graphs}

It is known that the join of two complexes is Cohen-Macaulay if and
only if they are both Cohen-Macaulay (see \cite{S1985} and
\cite{F1988}). If $\Delta$ is a $\CM_r$ complex of dimension $d-1$
and $\Delta'$ is a $\CM_{r'}$ complex of dimension $d'-1$, then
their join $\Delta \ast \Delta'$ is a $\CM_t$ complex where $t={\rm
max}\{d+r', d'+r\}$ \cite[Proposition 2.10]{HYZ2012}. However, if
one of the complexes is Cohen-Macaulay, this result could be
strengthened. Below we combine this with relevant known results.

\begin{thm}\label{join} Let $\Delta$ and $\Delta'$ be two
complexes of dimensions $d-1$ and $d'-1$, respectively. Then
\begin{itemize}

\item[(i)] The join complex $\Delta \ast \Delta'$ is Cohen-Macaulay
if and only if both $\Delta$ and $\Delta'$ are so.

\item[(ii)] If $\Delta$ is Cohen-Macaulay and $\Delta'$ is
$\CM_{r'}$ for some $r'\ge 1$, then $\Delta \ast \Delta'$ is
$\CM_{d+r'}$ (independent of $d'$). This is sharp, i.e., if
$\Delta'$ is not $\CM_{r'-1}$, then $\Delta \ast \Delta'$ is not
$\CM_{d+r'-1}$. In particular, a cone on $\Delta'$ is $\CM_{r'+1}$.

\item[(iii)] If $\Delta$ is $\CM_r$ and $\Delta'$ is $\CM_{r'}$ for
some $r, r' \ge 1$, then $\Delta \ast \Delta'$ is $\CM_t$ where
$t={\rm max}\{d+r', d'+r\}$. Conversely, if $\Delta \ast \Delta'$ is
$\CM_t$, then $\Delta$ is $\CM_{t-d'}$ and $\Delta'$ is $\CM_{t-d}$.
\end{itemize}

\end{thm}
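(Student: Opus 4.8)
The plan is to handle all three parts through the behaviour of links under the join, so I first fix conventions. Take the vertex sets of $\Delta$ and $\Delta'$ to be disjoint, so every face of $\Delta \ast \Delta'$ is uniquely $F \cup F'$ with $F \in \Delta$ and $F' \in \Delta'$, and I will repeatedly use $\dim(\Delta \ast \Delta') = d + d' - 1$ together with $\lk_{\Delta \ast \Delta'}(F \cup F') = \lk_\Delta(F) \ast \lk_{\Delta'}(F')$. A join is pure exactly when both factors are pure, and every $\CM_s$ complex is pure, so purity of all complexes in sight is automatic and I will not remark on it again. In particular, for a single vertex $w$ one has $\lk_{\Delta \ast \Delta'}(w) = \lk_\Delta(w) \ast \Delta'$ if $w \in \Delta$ and $\lk_{\Delta \ast \Delta'}(w) = \Delta \ast \lk_{\Delta'}(w)$ if $w \in \Delta'$.

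Part $(i)$ is classical and I would simply cite \cite{S1985} and \cite{F1988}, and for the forward implication of $(iii)$ I would quote \cite[Proposition 2.10]{HYZ2012}. The genuinely new content in $(iii)$ is its converse, which I would prove by a fill-up-with-a-facet device. Assume $\Delta \ast \Delta'$ is $\CM_t$ and fix a facet $F'$ of $\Delta'$, so $\#F' = d'$ and $\lk_{\Delta'}(F') = \{\emptyset\}$. For any face $F \in \Delta$ with $\#F \ge t - d'$, the face $F \cup F'$ satisfies $\#(F \cup F') \ge t$, whence $\lk_{\Delta \ast \Delta'}(F \cup F') = \lk_\Delta(F) \ast \{\emptyset\} = \lk_\Delta(F)$ is Cohen-Macaulay; the choice $F = \emptyset$ covers the degenerate range $t - d' \le 0$ and shows $\Delta$ itself is Cohen-Macaulay there. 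With purity this yields that $\Delta$ is $\CM_{t-d'}$, and by symmetry $\Delta'$ is $\CM_{t-d}$. The same device with $t = 0$ recovers the converse half of $(i)$.

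For $(ii)$ the bound $\CM_{d+r'}$ is strictly sharper than what $(iii)$ gives after relaxing $\Delta$ to $\CM_1$, so a direct induction is needed; I would induct on $d + r'$. When $d = 0$ we have $\Delta = \{\emptyset\}$ and $\Delta \ast \Delta' = \Delta'$ is $\CM_{r'} = \CM_{d+r'}$, the base case. For $d \ge 1$ I would apply Lemma \ref{basiclemma} with $t = d + r' \ge 1$ and verify that every vertex link is $\CM_{d+r'-1}$. If $w \in \Delta$, then $\lk_\Delta(w)$ is Cohen-Macaulay of dimension $d - 2$, and the inductive hypothesis applied to $\lk_\Delta(w) \ast \Delta'$ (parameter $(d-1) + r'$) gives $\CM_{d+r'-1}$. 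If $w \in \Delta'$, then $\lk_{\Delta'}(w)$ is $\CM_{r'-1}$: for $r' = 1$ this link is Cohen-Macaulay and $\Delta \ast \lk_{\Delta'}(w)$ is Cohen-Macaulay by $(i)$, while for $r' \ge 2$ the inductive hypothesis applied to $\Delta \ast \lk_{\Delta'}(w)$ (parameter $d + (r'-1)$) again gives $\CM_{d+r'-1}$. In either case Lemma \ref{basiclemma} closes the step. The cone assertion is the instance with $\Delta$ a single point ($d = 1$). The sharpness claim is precisely the converse of $(iii)$ specialized to $t = d + r' - 1$: it forces $\Delta'$ to be $\CM_{(d+r'-1)-d} = \CM_{r'-1}$, so if $\Delta'$ is not $\CM_{r'-1}$ the join cannot be $\CM_{d+r'-1}$.

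The routine parts are the link formulas and the purity bookkeeping. The one place demanding care is the induction in $(ii)$: because the two vertex-link cases decrease $d$ and $r'$ respectively, the single induction on $d + r'$ is what binds them, and I must correctly match the reduced parameters to the target index $d + r' - 1$ and dispatch the boundary $r' = 1$ to part $(i)$. Getting these indices aligned, rather than any deeper structural difficulty, is the crux.
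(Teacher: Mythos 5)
Your proof is correct, and for the heart of part (ii) --- the positive bound $\CM_{d+r'}$ --- you follow essentially the paper's own argument: induction on $d+r'$, splitting vertex links of the join as $\lk_{\Delta\ast\Delta'}(w)=\lk_\Delta(w)\ast\Delta'$ or $\Delta\ast\lk_{\Delta'}(w)$ and closing the step with Lemma \ref{basiclemma}. (Your bookkeeping is in fact tighter than the paper's: starting the induction at $d=0$ correctly absorbs the empty links that arise when $d=1$, and you explicitly route the $r'=1$ case through part (i) rather than applying an induction hypothesis outside its stated range.) Where you genuinely diverge is in the remaining claims. The paper cites \cite{HYZ2012} for all of (iii), converse included, and then proves the sharpness statement in (ii) by a second, rather sketchy induction on $d$ (no base case is spelled out there, and its link formula contains typos). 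You instead prove the converse of (iii) from scratch by the facet-filling device --- augmenting an arbitrary face $F\in\Delta$ by a fixed facet $F'$ of $\Delta'$ so that $\#(F\cup F')\ge t$ while $\lk_{\Delta\ast\Delta'}(F\cup F')=\lk_\Delta(F)\ast\{\emptyset\}=\lk_\Delta(F)$ --- and then obtain sharpness in (ii) as the specialization $t=d+r'-1$ of that converse. This buys you two things: a self-contained proof of the converse in place of an external citation, and a sharpness argument that is an immediate corollary rather than a separate induction; it is the cleaner route through what is the weakest part of the paper's proof.
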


\begin{proof}
The statement in $(i)$ is proved by Sava \cite{S1985} and Fr\"oberg
\cite{F1988}. The assertion $(iii)$ is proved in \cite[Theorem
2.10]{HYZ2012}. We prove $(ii)$ using induction on $d+r' \ge 2$. Let
$d+r'=2$, i.e., $d=1$ and $r'=1$.  Then $\Delta = \{v\}$ is a
singleton. Thus $\lk_{\Delta \ast \Delta'}(v) = \Delta'$, which is
$\CM_1$. For $v \in \Delta'$, $\lk_{\Delta \ast \Delta'}(v) = \Delta
\ast \lk_{\Delta'}(v)$, which is Cohen-Macaulay by $(i)$. Thus by
Lemma \ref{basiclemma}, $\Delta \ast \Delta'$ is $\CM_2$. Now let
$d+r'\ge 2$. Let $v\in \Delta$. Then, $\lk_{\Delta \ast \Delta'}(v)
= \lk_\Delta (v)\ast \Delta'$. But $\lk_\Delta (v)$ is
Cohen-Macaulay of dimension less than $d-1$, and $\Delta'$ is
$\CM_{r'}$. Thus by induction hypothesis $\lk_{\Delta\ast \Delta'}
(v)$ is $\CM_{d-1+r'}$. If $v\in \Delta'$, then $\lk_{\Delta \ast
\Delta'}(v) = \Delta' \ast \lk_{\Delta'}(v)$. But $\lk_{\Delta'}(v)$
is $\CM_{r'-1}$ and hence $\lk_{\Delta\ast \Delta'} (v)$ is again
$\CM_{d+r'-1}$. Therefore, $\Delta \ast \Delta'$ is $\CM_{d+r'}$. To
prove that this result is sharp, proceed by induction on $d\ge 1$.
Indeed, in this case, for any $v\in \Delta$,  $\lk_\Delta(v)$ has
dimension less than $d-1$ and hence by induction hypothesis,
$\lk_{\Delta \ast \Delta'}(v) = \lk_\Delta(v) \ast \Delta$ is not
$\CM_{d+r'-2}$. Therefore, $\Delta \ast \Delta'$ is not
$\CM_{d+r'-1}$
\end{proof}

Let $G\sqcup G'$ denote the disjoint union of graphs $G$ and $G'$.
By the fact that Ind$(G\sqcup G') =$ Ind$(G)\ast$Ind$(G')$, the
counter-part of Theorem \ref{join} for graphs will be the following.

\begin{thm}\label{joingraphs} Let $G$ and $G'$ be two
graphs on disjoint sets of vertices and of dimensions $d-1$ and
$d'-1$, respectively. Then
\begin{itemize}

\item[(i)] The graph $G\sqcup G'$ is Cohen-Macaulay
if and only if both $G$ and $G'$ are so.

\item[(ii)] If $G$ is Cohen-Macaulay and $G'$ is
$\CM_{r'}$ for some $r'\ge 1$, then $G\sqcup G'$ is $\CM_{d+r'}$. If
$G'$ is not $\CM_{r'-1}$, then $G\sqcup G'$ is not $\CM_{d+r'-1}$.

\item[(iii)] If $G$ is $\CM_r$ and $G'$ is $\CM_{r'}$ for
some $r, r' \ge 1$, then $G\sqcup G'$ is $\CM_t$ where $t={\rm
max}\{d+r', d'+r\}$. Conversely, if $G\sqcup G'$ is $\CM_t$, then
$G$ is $\CM_{t-d'}$ and $G'$ is $\CM_{t-d}$.
\end{itemize}

\end{thm}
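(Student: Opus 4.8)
The plan is to deduce Theorem~\ref{joingraphs} directly from Theorem~\ref{join} via the stated identity $\mbox{Ind}(G\sqcup G') = \mbox{Ind}(G)\ast\mbox{Ind}(G')$. Since a graph is by definition $\CM_t$ precisely when its independence complex is $\CM_t$, and since the dimensions transfer (the dimension of $G$ equals that of $\mbox{Ind}(G)$, so $\mbox{Ind}(G)$ has dimension $d-1$ and $\mbox{Ind}(G')$ has dimension $d'-1$), each clause of the graph statement is literally the corresponding clause of the complex statement read through this dictionary. First I would set $\Delta = \mbox{Ind}(G)$ and $\Delta' = \mbox{Ind}(G')$, observe that these have the right dimensions, and note that $\Delta\ast\Delta' = \mbox{Ind}(G\sqcup G')$ because the two vertex sets are disjoint and no edges run between them, so an independent set of $G\sqcup G'$ is exactly a disjoint union of an independent set of $G$ and one of $G'$.

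With this translation in place, part~(i) is immediate from Theorem~\ref{join}(i): $G\sqcup G'$ is Cohen-Macaulay iff $\Delta\ast\Delta'$ is, iff both $\Delta$ and $\Delta'$ are, iff both $G$ and $G'$ are. Part~(iii) transfers in the same mechanical way from Theorem~\ref{join}(iii), including the converse direction, since the parameters $d,d',r,r'$ are unchanged under the dictionary. For part~(ii) I would apply Theorem~\ref{join}(ii): if $G$ is Cohen-Macaulay then $\Delta$ is Cohen-Macaulay, and if $G'$ is $\CM_{r'}$ then $\Delta'$ is $\CM_{r'}$, so $\Delta\ast\Delta'$ is $\CM_{d+r'}$, whence $G\sqcup G'$ is $\CM_{d+r'}$; the sharpness clause likewise carries over verbatim.

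The only point requiring genuine care, and what I expect to be the main (though modest) obstacle, is verifying the join identity and the dimension bookkeeping cleanly, since everything else is a formal restatement. In particular one should confirm that the convention $\mbox{Ind}(G)$ has dimension $d-1$ matches the indexing used in Theorem~\ref{join}, so that the output parameters $d+r'$ and $\max\{d+r',\,d'+r\}$ come out in terms of the graph dimensions exactly as stated. Once the identity $\mbox{Ind}(G\sqcup G')=\mbox{Ind}(G)\ast\mbox{Ind}(G')$ is invoked, no further combinatorial or homological work is needed, and the proof reduces to a single sentence citing Theorem~\ref{join} for each of the three parts.
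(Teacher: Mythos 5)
Your proposal is correct and is exactly how the paper handles this result: the authors state the identity $\mathrm{Ind}(G\sqcup G') = \mathrm{Ind}(G)\ast\mathrm{Ind}(G')$ and present Theorem~\ref{joingraphs} as the immediate graph counterpart of Theorem~\ref{join}, with no further argument. Your dictionary between graph dimensions and complex dimensions and the clause-by-clause transfer, including the sharpness and converse statements, matches the paper's (implicit) reasoning.
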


\section{Two characterizations of bipartite $\CM_t$ graphs}

We now restrict to the case of bipartite graphs. Since
Cohen-Macaulay bipartite graphs are characterized by Herzog and Hibi
\cite[Theorem 3.4]{HH2005}, and also in a different version by Cook
and Nagel \cite[Proposition 4.8]{CN2012}, we consider the
non-Cohen-Macaulay case.

\begin{thm}\label{main} Let $G$ be an unmixed bipartite
graph of dimensions $d-1$. Let $K_{n,n}$, with $n\ge 2$, be a
maximal complete bipartite subgraph of $G$ of minimum dimension.
Then $G$ is $\CM_{d-n+1}$ but it is not $\CM_{d-n}$.
\end{thm}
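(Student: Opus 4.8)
The plan is to prove both assertions by induction on the dimension $d-1$ of $G$, using Lemma \ref{lemmagraphs} as the engine: since $d-n+1 \ge 1$ (because $n \le d$), establishing that $G$ is $\CM_{d-n+1}$ amounts to showing that $G$ is unmixed and that $G \setminus N[v]$ is $\CM_{d-n}$ for every vertex $v$. Unmixedness is given, so I would fix a pure order $(\{x_1,\dots,x_d\}, \{y_1,\dots,y_d\})$ of $G$ and analyze the deletion $G \setminus N[v]$ for each vertex $v$. The key structural input is Lemma \ref{compneighb}: because $K_{n,n}$ sits on complete neighborhoods, deleting a vertex and its closed neighborhood interacts cleanly with the embedded $K_{n,n}$. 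I expect $G \setminus N[v]$ to again be unmixed bipartite, of dimension $d-2$, and I would track what happens to its maximal complete bipartite subgraphs of minimum dimension.

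The heart of the argument is a case analysis on how $v$ relates to the copy of $K_{n,n}$ on $(\{x_{i_1},\dots,x_{i_n}\}, \{y_{i_1},\dots,y_{i_n}\})$. First I would handle the case where $v$ is one of the matching-partner vertices of the $K_{n,n}$, say $v = x_{i_1}$: by Lemma \ref{compneighb}(i), $N[x_{i_1}]$ swallows all of $y_{i_1},\dots,y_{i_n}$ together with the rest of $x_{i_1}$'s neighbors, so in $G \setminus N[x_{i_1}]$ the remaining vertices $x_{i_2},\dots,x_{i_n}$ may lose their $K_{n,n}$ structure, typically dropping the minimum complete-bipartite dimension by one (to a $K_{n-1,n-1}$, or making the deletion Cohen-Macaulay when $n=2$). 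Then the induction hypothesis applied to $G \setminus N[x_{i_1}]$, a graph of dimension $d-2$ whose minimum maximal complete bipartite subgraph is $K_{n-1,n-1}$, gives $\CM_{(d-1)-(n-1)+1} = \CM_{d-n+1}$, which is one better than the required $\CM_{d-n}$. The complementary case is where $v$ is \emph{not} tied to the $K_{n,n}$, so that a full copy of $K_{n,n}$ (or a larger minimum dimension) survives in $G \setminus N[v]$; here the induction hypothesis directly yields $\CM_{d-n}$, matching the requirement. Combining the cases over all $v$ and invoking Lemma \ref{lemmagraphs} upgrades this to $G$ being $\CM_{d-n+1}$. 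The base cases are when $G$ itself is a complete bipartite graph ($d=n$), where the claim reduces to $\CM_1$, i.e.\ Buchsbaum, supplied by Theorem \ref{HYZBuch}.

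For the sharpness half, that $G$ is \emph{not} $\CM_{d-n}$, I would exhibit a face whose link fails to be Cohen-Macaulay, localizing the obstruction at the embedded $K_{n,n}$. The natural candidate is to iterate the first deletion case above: repeatedly deleting closed neighborhoods of matching vertices of the $K_{n,n}$ peels off one $x$–$y$ pair at a time and, after $d-n$ steps, I expect to arrive at exactly the complete bipartite graph $K_{n,n}$ as a link, which by Theorem \ref{HYZBuch} is Buchsbaum but not Cohen-Macaulay. Translating through the identity $\lk_{{\rm Ind}(G)}(F) = {\rm Ind}(G \setminus N[F])$, this produces a face $F$ of size $d-n$ whose link is $K_{n,n}$, certifying that $G$ is not $\CM_{d-n}$ via the contrapositive direction of Lemma \ref{lemmagraphs}.

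The main obstacle I anticipate is the bookkeeping in the first deletion case: I must verify that deleting a closed neighborhood of a $K_{n,n}$-vertex really does produce an unmixed bipartite graph whose minimum maximal complete bipartite subgraph has dimension exactly one less, rather than accidentally creating a \emph{smaller} complete bipartite subgraph elsewhere (which would lower $n$ and weaken the induction) or destroying the $K_{n,n}$ structure entirely. Controlling this requires careful use of both parts of Lemma \ref{compneighb} together with the transitivity condition (2) of Theorem \ref{Villarreal}, to ensure that no new small complete bipartite subgraph appears and that the maximality and minimum-dimension hypotheses are inherited. Getting the precise value of the surviving minimum dimension in every subcase — and confirming it aligns in both the lower bound and sharpness directions — is where the real work lies.
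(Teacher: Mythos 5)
Your overall skeleton (induction on dimension driven by Lemma \ref{lemmagraphs}, a case analysis on how $v$ meets the $K_{n,n}$ via Lemma \ref{compneighb}, base case $G=K_{n,n}$ handled by Theorem \ref{HYZBuch}) is the same as the paper's, but your first deletion case contains a genuine error, in both structure and logic. Structurally: if $v = x_{i_1}$ is a vertex of the minimum $K_{n,n}$, then by Lemma \ref{compneighb}(ii) every neighbor of each $x_{i_k}$ is already a neighbor of $x_{i_1}$, so in $G \setminus N[x_{i_1}]$ the vertices $x_{i_2},\dots,x_{i_n}$ become \emph{isolated}; no $K_{n-1,n-1}$ survives. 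The paper instead writes $G\setminus N[x_{i_1}]$ as a disjoint union of $c \ge n-1$ isolated vertices and an unmixed bipartite graph $H$, applies the induction hypothesis to $H$ (whose minimum complete bipartite dimension is still at least $n$, unless $H$ is cross-free and hence Cohen--Macaulay), and then uses Theorem \ref{joingraphs}(ii) to get exactly $\CM_{c+(d-c-n)} = \CM_{d-n}$ for the deletion. Logically: even granting your structure, your conclusion that $G\setminus N[v]$ is $\CM_{d-n+1}$, ``one better than the required $\CM_{d-n}$,'' has the hierarchy backwards --- $\CM_t$ implies $\CM_r$ only for $r \ge t$, so $\CM_{d-n+1}$ is strictly \emph{weaker} than $\CM_{d-n}$ and does not suffice for Lemma \ref{lemmagraphs}. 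Indeed, if your analysis were right (a surviving minimum $K_{n-1,n-1}$, hence by the sharpness half of the induction a deletion that is \emph{not} $\CM_{d-n}$), it would prove that $G$ is not $\CM_{d-n+1}$, contradicting the very theorem you are proving; this is a sign that the structural claim cannot stand.

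The sharpness half has a matching problem. You propose to ``iterate the first deletion case,'' i.e., delete closed neighborhoods of vertices of the $K_{n,n}$ itself; but one such deletion destroys the $K_{n,n}$ entirely (all of its $y$-vertices are removed and its remaining $x$-vertices are isolated), so this iteration can never terminate at $K_{n,n}$ as a link. What is needed --- and what the paper supplies --- is to delete vertices \emph{outside} the $K_{n,n}$ whose closed neighborhoods are disjoint from it, and the existence of such vertices is exactly where maximality of $K_{n,n}$ enters: if $x_i \notin K_{n,n}$ is adjacent to some $y_j \in K_{n,n}$, then by Lemma \ref{compneighb} it is adjacent to all of them, and then its matching partner $y_i$ can have no neighbor in $K_{n,n}$ (otherwise $K_{n,n}$ would extend to $K_{n+1,n+1}$); so from each outside matching pair one can always choose a usable vertex. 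This selection argument, which your sketch does not address, is the crux of the sharpness proof (the paper then concludes by induction together with the sharpness clause of Theorem \ref{joingraphs}(ii) rather than by an explicit $(d-n)$-step peeling, but that difference is cosmetic). Finally, the ``main obstacle'' you flag --- ensuring that deletions never create a maximal complete bipartite subgraph of dimension smaller than $n$ --- is real, and the paper resolves it by observing that, by Lemma \ref{compneighb}, every maximal complete bipartite subgraph of $G$ is either wholly swallowed by $N[v]$ (its leftover vertices becoming isolated) or left intact; acknowledging the obstacle is not the same as closing it.
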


\begin{proof}
We prove both assertions by induction on $d \ge 2$. If $d = 2$ then
$G=K_{2,2}$ which is $\CM_1$ but it is not Cohen-Macaulay. Assume
that $d
> 2$. We show that for every $v\in G$, $G\setminus N[v]$ is
$\CM_{n-d}$ and for some $v\in G$ it is not $\CM_{n-d-1}$. Let
$(\{x_1, \cdots, x_d\}, \{y_1, \cdots, y_d\})$ be a pure order of
$G$. Let $x_i$ be a vertex of some maximal bipartite subgraph
$K_{m,m}$ with $m\ge n$. Then $G\setminus N[x_i]$ is a disjoint
union of $c\ge m-1$ isolated vertices and an unmixed bipartite graph
$H$ of dimension $d-c-2$. The graph $H$ is unmixed because
Ind$(G\setminus N[x_i]) = \lk_{x_i}({\rm Ind}(G))$, and any link of
a pure complex is pure. But $G\setminus N[x_i] = \{x_{i_1}, \cdots,
x_{i_c}\} \sqcup H$ is unmixed if and only if $H$ is so. Observe
that if $y_{j_0}$ is a vertex of a maximal bipartite subgraph of $G$
and $y_{j_0}\in N[x_i]$, then by Lemma \ref{compneighb}, all $y_j$
vertices of this subgraph belong to $N[x_i]$. Thus if $H$ has no
crosses, by Proposition \ref{cookNag} it is Cohen-Macaulay.
Otherwise, the minimum dimension of maximal complete bipartite
subgraphs of $H$ will not be less than the minimum dimension of such
subgraphs in $G$. Hence by the induction hypothesis $H$ is
$\CM_{d-c-n}$ and by Theorem \ref{joingraphs}(ii), $G\setminus
N[x_i]$ is $\CM_{n-d}$. If $x_i$ does not belong to any maximal
bipartite subgraph of $G$ of positive dimension, then $G\setminus
N[x_i]$ is a disjoint union of $c\ge 0$ isolated vertices and an
unmixed bipartite graph $H$ of dimension $d-c-2$. Hence $H$ is
$\CM_{d-c-n}$ and by Theorem \ref{joingraphs}(ii), $G\setminus
N[x_i]$ is $\CM_{d-n}$. A similar argument reveals that for any
$y_i\in G$, the graph $G\setminus N[y_i]$ is $\CM_{d-n}$. Therefore,
by Lemma \ref{lemmagraphs}, $G$ is $\CM_{d-n+1}$. We now proceed the
induction step to show that this result is sharp. Let $d>2$ and let
$K_{n,n}$, $n\ge 2$, be a maximal bipartite subgraph of $G$ of
minimum dimension. Take $x_i \in G\setminus K_{n,n}$. First assume
that $x_i$ is not adjacent to any vertex in $K_{n,n}$ and consider
$G\setminus N[x_i]$. Let $G\setminus N[x_i]$ be the disjoint union
of $c\ge 0$ isolated vertices and an unmixed bipartite graph $H$ of
dimension $d-c-2$. Then $H$ contains $K_{n,n}$ and hence by
induction hypothesis $H$ is sharp $\CM_{d-c-n}$ and $G\setminus
N[x_i]$ is sharp  $\CM_{d-n}$. Therefore, $G$ can not be
$\CM_{d-n}$. Now assume that $x_iy_j \in G$ for some $j$ with
$y_j\in K_{n,n}$. Then by purity of the order, all $y_k \in K_{n,n}$
is adjacent to $x_i$. But then $y_i$ is not adjacent to any vertex
of $K_{n,n}$, because otherwise, $ K_{n,n}$ will not be maximal. In
this case, consider $G\setminus N[y_i]$ and proceed similar to the
previous case.
\end{proof}

As a second characterization of bipartite $\CM_t$ graphs, we show
that any $\CM_t$ graph is obtained from a Cohen-Macaulay graph $H$
by replacing the perfect matching edges of $H$ by complete bipartite
graphs. This statement will be more precise in the next theorem. But
first we provide a definition and a lemma.

\begin{defn}\label{replacement} Let $H$ be an unmixed bipartite graph with
pure order $$(\{x_1, \cdots, x_r\},\{y_1, \cdots, y_r\}).$$ For a
fixed $i$, by {\it replacing} the edge $x_iy_i\in H$ with a complete
bipartite graph
$$K_{n_i,n_i} = \{x_{i1}, \cdots, x_{in_i}\}\times\{y_{i1}, \cdots, y_{in_i}\}$$
we mean a bipartite graph $H'$ with vertex set $$(\{x_1, \cdots,
x_{i-1}, x_{i1}, \cdots, x_{in_i}, x_{i+1}, \cdots, x_r\}, \{y_1,
\cdots, y_{i-1}, y_{i1}, \cdots, y_{in_i}, y_{i+1}, \cdots,
y_r\}),$$ preserving all adjacencies, i.e.,
\begin{itemize}
\item[(i)] $x_sy_t\in H'$ for all $s, t \ne i$ if and only if
$x_ty_s\in H$, \item[(ii)] $x_{i_k}y_j\in H'$ for all $k$ if and
only if $x_iy_j\in H$, \item[(iii)] $x_jy_{i_k}\in H'$ for all $k$
if and only if $x_jy_i\in H$.
\end{itemize}
\end{defn}

\begin{lem}\label{unmixedreplace} Let $G$ be an unmixed bipartite
graph with pure order on the vertex set $V(G) = V\cup W$ where $V =
\{x_1, \cdots, x_d\}$ and $W = \{y_1, \cdots, y_d\}$. Let
$n_1,\cdots, n_d$ be any positive integers. Let $G' = G(n_1,\cdots,
n_d)$ be the graph obtained by replacing each edge $x_iy_i$ with the
complete bipartite graph $K_{n_i,n_i} = \{x_{i1}, \cdots,
x_{in_i}\}\times \{y_{i1}, \cdots, y_{in_i}\}$ for all $i = 1,\dots,
d$. Then $G'$ is also unmixed.
\end{lem}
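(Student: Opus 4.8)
The plan is to verify that $G' = G(n_1,\cdots,n_d)$ satisfies Villarreal's criterion (Theorem \ref{Villarreal}) directly, using the pure order of $G$ to produce a pure order of $G'$. The natural candidate is to pick, from each inflated block $K_{n_i,n_i}$, the ``diagonal'' matching vertices: set $x_i' = x_{i1}$ and $y_i' = y_{i1}$, and check that after the replacement one can exhibit a partition of $V(G')$ into two sides with a perfect matching satisfying condition (2) of Theorem \ref{Villarreal}. Since each $K_{n_i,n_i}$ is itself unmixed with perfect matching $x_{ik}y_{ik}$ (for $k=1,\dots,n_i$), the edges $x_{ik}y_{ik}$ together give a perfect matching of $G'$, establishing condition (1).

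**The heart of the argument is condition (2):** if $x_a y_b$ and $x_b y_c$ are edges in $G'$ (with $a,b,c$ ranging over the new index set, where each index is a pair $(i,k)$), then $x_a y_c$ must be an edge in $G'$. I would break this into cases according to whether the indices $a,b,c$ lie in the same inflated block or in distinct blocks. \textbf{Within a single block} $K_{n_i,n_i}$, completeness makes condition (2) automatic. \textbf{Across blocks}, the key observation is that by Definition \ref{replacement}(ii)--(iii), adjacency between a vertex of block $i$ and a vertex of block $j$ (for $i\ne j$) in $G'$ depends only on the blocks $i,j$, not on the particular representatives: $x_{ik}y_{jl}\in G'$ for \emph{some} $k,l$ if and only if $x_iy_j\in G$, and then it holds for \emph{all} $k,l$. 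Thus cross-block adjacencies in $G'$ are governed exactly by the adjacencies in $G$, so condition (2) for $G'$ reduces to condition (2) for $G$, which holds by the unmixedness of $G$. The mixed cases (two of the three indices in one block, the third in another) are handled the same way, since again only the block-level adjacency matters and one index can be freely replaced by a representative of its block.

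**The step I expect to be the main obstacle** is bookkeeping the reduction cleanly when the three indices $a,b,c$ fall into a configuration where two coincide at the block level but differ as vertices---for instance $a=(i,k)$ and $b=(i,k')$ in the same block with $c=(j,l)$ in another. Here I must make sure that the hypothesis $x_a y_b\in G'$ (an intra-block edge, always present) combined with $x_b y_c\in G'$ (a cross-block edge, equivalent to $x_iy_j\in G$) correctly forces $x_a y_c\in G'$; this follows because $x_a y_c$ is also a cross-block edge between blocks $i$ and $j$, hence present precisely when $x_iy_j\in G$, which we already have. Once the case analysis is organized so that every cross-block adjacency is replaced by the corresponding edge of $G$, the conclusion is immediate from Theorem \ref{Villarreal} applied to $G$. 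A short remark will note that the resulting order on $G'$ is genuinely a pure order, so $G'$ is unmixed.
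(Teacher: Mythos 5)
Your proposal is correct and follows essentially the same route as the paper: exhibit the natural inflated order on $V(G')$ with perfect matching $x_{ik}y_{ik}$, and verify Villarreal's condition (2) by translating every cross-block adjacency in $G'$ back to the corresponding edge of $G$. Your explicit treatment of the degenerate cases (two of the three block indices coinciding) is in fact slightly more careful than the paper's one-line argument, which handles those cases only implicitly.
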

\begin{proof} Let $K_{n_i,n_i} = \{x_{i1}, \cdots, x_{in_i}\}
\times \{y_{i1}, \cdots, y_{in_i}\}$. Then $$V(G') = (\{x_{11},
\cdots, x_{1n_1}, \cdots, x_{d1}, \cdots, x_{dn_d}\}, \{y_{11},
\cdots, y_{1n_1}, \cdots, y_{d1}, \cdots, y_{dn_d}\})$$ is a pure
order of $G'$. In fact, for all $i, r$, $x_{ir}y_{ir} \in G'$. Also
if $x_{ir}y_{js} \in G'$ and $x_{js}y_{kt} \in G'$, then $x_iy_j \in
G$ and $x_jy_k \in G$, and hence, $x_iy_k \in G$. Thus by the
construction of $G'$, $x_{ir}y_{kt} \in G'$.
\end{proof}

\begin{thm} \label{macaulayreplace} Let $G$ be a Cohen-Macaulay bipartite
graph with a Macaulay order on the vertex set $V(G) = V\cup W$ where
$V = \{x_1, \cdots, x_d\}$ and $W = \{y_1, \cdots, y_d\}$. Let $n_1,
\cdots, n_d$ be any positive integers with $n_i \ge 2$ for at list
one $i$. Let $G' = G(n_1, \cdots, n_d)$ be the graph obtained by
replacing each edge $x_iy_i$ with the complete bipartite graph
$K_{n_i,n_i}$ for all $i = 1,\dots, d$. Let $n_{i_0} = {\rm min}
\{n_i > 1: i = 1, \cdots, d\}$, $n= \sum_{i=1}^{d}n_i$. Then $G'$ is
exclusively a $\CM_{n-n_{i_0}+1}$ graph. Furthermore, any bipartite
$\CM_t$ graph is obtained by such a replacement of complete
bipartite graphs in a unique bipartite Cohen-Macaulay graph.
\end{thm}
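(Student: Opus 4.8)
The plan is to prove the two assertions separately: the exact $\CM$-degree of $G'$ will follow by feeding $G'$ into Theorem \ref{main}, while the structural ``furthermore'' claim will be obtained by reversing the replacement construction in a canonical way. For the first assertion, Lemma \ref{unmixedreplace} already gives that $G'$ is unmixed bipartite, and since $G'$ has $n=\sum_{i=1}^{d}n_i$ vertices on each side, the pure order exhibited there shows $\dim G' = n-1$. It then suffices to identify the minimum dimension of a maximal nontrivial complete bipartite subgraph of $G'$ and to apply Theorem \ref{main} with its parameter ``$d$'' equal to $n$.

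The core point is to show this minimum dimension equals $n_{i_0}-1$. Each block $K_{n_i,n_i}$ with $n_i\ge 2$ is a complete bipartite subgraph of $G'$ on the matched pairs $x_{ir}y_{ir}$. The key claim is that every nontrivial complete bipartite subgraph $K_{m,m}$ of $G'$ taken, as in Lemma \ref{compneighb}, on matched index pairs is contained in a single block: if it used matched pairs from two distinct blocks $i<j$, then completeness would force both $x_{ir}y_{js}$ and $x_{js}y_{ir}$ to be edges of $G'$, hence both $x_iy_j$ and $x_jy_i$ to be edges of $H$; but $x_jy_i\in H$ with $j>i$ contradicts the Macaulay order. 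Therefore the maximal such subgraphs are exactly the blocks $K_{n_i,n_i}$ with $n_i\ge 2$, whose minimum dimension is $n_{i_0}-1$. Theorem \ref{main} then gives that $G'$ is $\CM_{n-n_{i_0}+1}$ but not $\CM_{n-n_{i_0}}$, i.e. exclusively $\CM_{n-n_{i_0}+1}$.

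For the structural statement, begin from an arbitrary unmixed bipartite graph $G'$ (a $\CM_t$ graph is in particular unmixed) with a pure order, and declare two matched indices $i,j$ equivalent if $i=j$ or both $x_iy_j$ and $x_jy_i$ are edges of $G'$, that is, if they lie in a common cross. This is an equivalence relation; the only nonformal point is transitivity, which follows from Villarreal's exchange condition (Theorem \ref{Villarreal}(2)) applied to the chains $x_iy_j,x_jy_k$ and $x_ky_j,x_jy_i$. Each class spans a complete bipartite subgraph on matched pairs, and contracting every class to a single matched pair yields a bipartite graph $H$; adjacency in $H$ is independent of the chosen representatives by Lemma \ref{compneighb}. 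By construction $G'=H(n_1,\dots,n_d)$ with $n_i$ the class sizes, and $H$ has no cross, since a cross in $H$ between two distinct classes would merge them. Hence $H$ is cross-free and so Cohen-Macaulay by Proposition \ref{cookNag}, with a Macaulay order supplied by Theorem \ref{HerHib}.

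Uniqueness follows because the classes are intrinsic to $G'$: they are precisely its maximal nontrivial complete bipartite subgraphs on matched pairs, together with the trivial singleton edges, exactly as analyzed in the forward direction. Thus any presentation $G'=H'(m_1,\dots)$ with $H'$ Cohen-Macaulay must have its replacement blocks equal to these classes, which forces $H'=H$ and $m_i=n_i$. The main obstacle is the block-containment claim of the second paragraph together with the transitivity of the cross-relation; these are exactly the steps where the Macaulay order and Villarreal's condition do the essential work, and once they are in place the remaining verifications, namely purity, well-definedness of $H$, and matching conditions (i)--(iii) of Definition \ref{replacement}, are routine.
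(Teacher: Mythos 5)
Your proof is correct and takes essentially the same route as the paper's: the first claim via Lemma \ref{unmixedreplace} and Theorem \ref{main}, and the ``furthermore'' claim by collapsing the maximal nontrivial complete bipartite subgraphs on matched pairs (your cross-equivalence classes) to single edges, obtaining a cross-free and hence Cohen-Macaulay graph $H$ by Proposition \ref{cookNag}, with Lemma \ref{compneighb} ensuring well-definedness and uniqueness. The only difference is one of detail, in your favor: you explicitly verify that the blocks $K_{n_i,n_i}$ are exactly the maximal nontrivial complete bipartite subgraphs of $G'$ (so Theorem \ref{main} applies with minimum dimension $n_{i_0}-1$), and you prove transitivity of the cross-relation via Villarreal's condition, two points the paper passes over with ``follows by'' and ``by maximality, these complete subgraphs are disjoint.''
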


\begin{proof} The first claim follows by Lemma \ref{unmixedreplace}
and Theorem \ref{main}. We settle the second claim. Let $G$ be a
bipartite $\CM_t$ graph with a pure order of vertices. Let
$K_{n_1,n_1}, \cdots, K_{n_d,n_d}$ be the maximal bipartite
subgraphs of $G$, where $n_i \ge 1$ for all $i$. Observe that, by
maximality, these complete subgraphs of $G$ are disjoint. Choose one
edge $x_{i1}y_{i1}$ from each subgraph $K_{n_i,n_i}$ for all $i = 1,
\cdots, d$. Let $H$ be the induced subgraph of $G$ on the vertex set
$(\{x_{11}, \cdots, x_{d1}\},\{y_{11}, \cdots, y_{d1}\})$. By Lemma
\ref{compneighb}, $H$ is independent of the choice of particular
edge $x_{i1}y_{i1}$ from $K_{n_i,n_i}$ and hence $H$ is unique.
Since the ordering of vertices of $G$ is a pure order, its
restriction to $H$ is also pure. Thus, $H$ is an unmixed bipartite
graph. But by the maximality of the complete bipartite subgraphs
$K_{n_i,n_i}$, and the construction of $H$, it is cross-free.
Therefore, by Proposition \ref{cookNag}, $H$ is Cohen-Macaulay. Now
any edge $x_{i1}y_{i1}$ replace in $H$ with $K_{n_i,n_i}$ for all $i
= 1, \cdots, d$, preserving all other adjacencies. Let $H'$ be the
resulting graph. Then by the construction, $G = H'$, as required.
\end{proof}

\begin{rem}\label{remark} Let $H$ be a bipartite Cohen-Macaulay graph and let $G
= H'$ be a bipartite $\CM_t$ graph obtained from $H$ by the
replacing process described above. Assume that $G$ is is not
$\CM_{t-1}$ and $t \ge 2$. Using the the results of this section,
the following observations are immediate.

First of all, $1 \le {\rm dim} H \le t-1$. Because if ${\rm dim} H
\ge t$ and we replace just one $K_{n,n}$ with $n\ge 2$, then $G$ is
strictly $\CM_r$ with $r\ge t+1$. On the other hand, if ${\rm dim} H
= 0$, then $G$ is $\CM_1$.

If ${\rm dim} H = t-1$, then only one $K_{n,n}$ with $n\ge 2$ can be
replaced. Because replacing at least two $K_{n,n}$ with $n\ge 2$,
$G$ is strictly $\CM_r$ with $r \ge t+1$.

If ${\rm dim} H = t-1$, for replacing just one $K_{n,n}$, $n$ is
arbitrary and hence $G$ is of dimension $n+t-2$.

If ${\rm dim} H \le t-2$, the number of replacements should be at
least $2$. Again because if with one replacement of $K_{n,n}$, $n
\ge 2$, $G$ would be $\CM_r$ with $r \le t-1$.

When ${\rm dim} H \le t-2$, the maximum number of replacements of
$K_{n,n}$, $n \ge 2$, is at most $t - {\rm dim} H$ which may occur
replacing $K_{2,2}$'s.

For ${\rm dim} H \le t-2$, the maximum size of $K_{n,n}$ to be
replaced is also $n = t - {\rm dim} H$ which may occur when we have
two replacements.\\
\end{rem}

Using these remarks we may easily distinguish all bipartite $\CM_t$
graphs for $t = 2, 3, 4$.\\

\begin{exam}\label{exam1}
Bipartite $\CM_2$ graphs which are not Buchsbaum. Using the notation
of Remark \ref{remark} we have ${\rm dim} H = 1$. There are just two
non-isomorphic bipartite Cohen-Macaulay graphs of dimension one. By
replacing process, they produce two types of bipartite $\CM_2$
graphs which are not Buchsbaum. They are of arbitrary dimensions.
More precisely, one such graph is the disjoint union of an edge
$x_1y_1$ with $K_{n_2,n_2} = \{x_{21},\cdots,
x_{2n_2}\}\times\{y_{21},\cdots, y_{2n_2}\}$, $n_2\ge 2$, and the
other one consists of the first graph together with the edges
$x_1y_{2i}$ for all $i = 1,\cdots, n_2$. The second graph with $n_2
= 3$ could be depicted in Figure 1.
\end{exam}
\begin{picture}(100,60)(-100,30)
\put(0,0){\line(0,1){50}} \put(0,50){\line(1,-1){50}}
\put(50,50){\line(1,-1){50}} \put(50,50){\line(2,-1){100}}
\put(100,50){\line(1,-1){50}} \put(50,0){\line(0,1){50}}
\put(100,0){\line(0,1){50}} \put(150,0){\line(0,1){50}}
\put(100,0){\line(1,1){50}} \put(50,0){\line(2,1){100}}
\put(50,0){\line(1,1){50}}
\put(0,50){\line(2,-1){100}}\put(0,50){\line(3,-1){150}}
\put(0,0){\circle*{5}} \put(0,50){\circle*{5}}
\put(50,0){\circle*{5}} \put(50,50){\circle*{5}}
\put(100,0){\circle*{5}} \put(100,50){\circle*{5}}
\put(150,0){\circle*{5}} \put(150,50){\circle*{5}}
\put(-4,55){$x_1$} \put(46,55){$x_{21}$} \put(96,55){$x_{22}$}
\put(146,55){$x_{23}$}

\put(-4,-10){$y_1$} \put(46,-10){$y_{21}$} \put(96,-10){$y_{22}$}
\put(146,-10){$y_{23}$}
\end{picture}
\vspace{2cm}
$$Figure \ 1$$

\begin{exam}
Bipartite $\CM_3$ graphs which are not $\CM_2$. For these graphs
${\rm dim} H = 1, 2$.\\

If ${\rm dim} H = 1$, by Example \ref{exam1}, there are just two
bipartite $\CM_3$ graphs by replacing two edges of a perfect
matching by $K_{2,2}$'s. In this case, ${\rm dim} G = 3$. (see
Figure 2, and Figure 3).\\

If ${\rm dim} H = 2$, then there are $4$ non-isomorphic bipartite
Cohen-Macaulay graphs of dimension $2$. By replacing one perfect
matching edge with $K_{n,n}$ of arbitrary size in each
Cohen-Macaulay graph, they produce $7$ types of bipartite $\CM_3$
graphs which are not $\CM_2$. Note that depending on the choice of
the edge to be replaced in each case, we may get non-isomorphic
bipartite graphs. In this case ${\rm dim} G = n + 1$.
\end{exam}
\newpage
\begin{picture}(100,60)(-100,30)
\put(0,0){\line(0,1){50}} \put(0,50){\line(1,-1){50}}
\put(100,50){\line(1,-1){50}} \put(50,0){\line(0,1){50}}
\put(0,0){\line(1,1){50}} \put(100,0){\line(0,1){50}}
\put(150,0){\line(0,1){50}}
\put(100,0){\line(1,1){50}} 
\put(0,0){\circle*{5}} \put(0,50){\circle*{5}}
\put(50,0){\circle*{5}} \put(50,50){\circle*{5}}
\put(100,0){\circle*{5}} \put(100,50){\circle*{5}}
\put(150,0){\circle*{5}} \put(150,50){\circle*{5}}
\put(-4,55){$x_{11}$} \put(46,55){$x_{12}$} \put(96,55){$x_{21}$}
\put(146,55){$x_{22}$}

\put(-4,-10){$y_{11}$} \put(46,-10){$y_{12}$} \put(96,-10){$y_{21}$}
\put(146,-10){$y_{22}$}
\end{picture}
\vskip 2cm
$$Figure \ 2$$
\begin{picture}(100,60)(-100,30)
\put(0,0){\line(0,1){50}} \put(0,50){\line(1,-1){50}}
\put(50,50){\line(2,-1){100}} \put(100,50){\line(1,-1){50}}
\put(50,0){\line(0,1){50}}\put(0,0){\line(1,1){50}}
\put(100,0){\line(0,1){50}} \put(150,0){\line(0,1){50}}
\put(100,0){\line(1,1){50}} \put(0,50){\line(2,-1){100}}
\put(0,50){\line(3,-1){150}}
\put(50,50){\line(1,-1){50}}
 \put(0,0){\circle*{5}} \put(0,50){\circle*{5}}
\put(50,0){\circle*{5}} \put(50,50){\circle*{5}}
\put(100,0){\circle*{5}} \put(100,50){\circle*{5}}
\put(150,0){\circle*{5}} \put(150,50){\circle*{5}}
\put(-4,55){$x_{11}$} \put(46,55){$x_{12}$} \put(96,55){$x_{21}$}
\put(146,55){$x_{22}$}

\put(-4,-10){$y_{11}$} \put(46,-10){$y_{12}$} \put(96,-10){$y_{21}$}
\put(146,-10){$y_{22}$}
\end{picture}
\vskip 2cm
$$Figure \ 3$$

\begin{exam}
Bipartite $\CM_4$ graphs which are not $\CM_3$. For these graphs
${\rm dim} H = 1, 2, 3$.\\

If ${\rm dim} H = 1$, there are two bipartite $\CM_4$ graphs
obtained by replacing two edges of a perfect matching by
$K_{3,3}$'s. In this case, ${\rm dim} G = 5$. And, similarly, there
are two others obtained by replacing one edge with $K_{2,2}$ and
another edge with $K_{3,3}$.
In this case, ${\rm dim} G = 5$.\\

If ${\rm dim} H = 2$, then while there are $4$ non-isomorphic
bipartite Cohen-Macaulay graphs of dimension $2$, by replacing two
perfect matching edges with $K_{2,2}$'s in each Cohen-Macaulay
graph, they produce $7$ bipartite $\CM_4$ graphs which are not
$\CM_3$. They all have dimension $4$.\\

If ${\rm dim} H = 3$, then there are $10$ non-isomorphic bipartite
Cohen-Macaulay graphs of dimension $3$. Replacing one perfect
matching edge with $K_{n,n}$, $n\ge 2$, in each Cohen-Macaulay
graph, they produce $25$ bipartite $\CM_4$ graphs which are not
$\CM_3$. They all have dimension $n+2$. Out of all 36 bipartite $\CM_4$ graphs,
21 graphs are connected.\\
\end{exam}

\section*{Acknowledgments}
The authors would like to thank J. Herzog and U. Nagel for some
fruitful discussions on the subject of this paper. This work has
been supported by Center for International Studies \& Collaboration
(CISSC) and French Embassy in Tehran in the framework of the
Gundishapur project 27462PL on the Homological and Combinatorial
Aspects of Commutative Algebra and Algebraic Geometry. The research
of R. Zaare-Nahandi has been partially supported by research grant
no. of University of Tehran. The research of S. Yassemi was in part
supported by a grant from IPM (No. 91130214).

\bibliographystyle{amsplain}

\end{document}